\definecolor{red}{rgb}{1,0,0}
\definecolor{purp}{rgb}{0.55,0.15,0.51}
\newenvironment{pfct}[1]{\noindent{\bf Proof of #1.\,}}{\hfill$\Box$ \\}
\newtheorem{thm}{Theorem}
\newtheorem{conj}[thm]{Conjecture}
\newtheorem{lem}[thm]{Lemma}
\newtheorem{defn}[thm]{Definition}
\newcommand{\abs}[1]{\left\lvert{#1}\right\rvert}
\newcommand{\floor}[1]{\left\lfloor{#1}\right\rfloor}
\newcommand{\spine}{{\rm spine}}
\newcommand{\C}{\mathcal{C}}
\newcommand{\La}{{\rm La}}
\newcommand{\B}{\mathcal{B}}
\newcommand{\F}{{\mathcal F}}
\title{A simple discharging method for forbidden subposet problems}
\date{\mydate\today}
\author[1]{Ryan R. Martin\footnote{Research supported in part by a Simons Foundation grant (\# 353292)}\thanks{Research supported in part by NSF-DMS Grants 1604458, 1604773, 1604697, and 1603823.}}
\author[2]{Abhishek Methuku\thanks{Research supported in part by the Hungarian National Research, Development and Innovation Office -- NKFIH under the grant K116769.}\thanks{Research supported in part by a generous grant from the Combinatorics Foundation.}}
\author[3]{Andrew Uzzell\protect\footnotemark[2]\thanks{Research supported in part by a generous grant from the Institute for Mathematics and its Applications.}}
\author[1]{Shanise Walker\protect\footnotemark[2]\protect\footnotemark[5]}
\affil[1]{\footnotesize Department of Mathematics, Iowa State University, Ames, IA, USA. \texttt{\{rymartin,shanise1\}@iastate.edu}}
\affil[2]{\footnotesize Department of Mathematics, Central European University, Budapest, Hungary. \texttt{abhishekmethuku@gmail.com}}
\affil[3]{\footnotesize Department of Mathematics and Statistics, Grinnell College, Grinnell, IA, USA. \texttt{uzzellan@grinnell.edu}}
\begin{document}
\maketitle

\begin{abstract}
The poset $Y_{k, 2}$ consists of $k+2$ distinct elements  $x_1$, $x_2$, \dots,~$x_{k}$, $y_1$,~$y_2$, such that $x_1 \le x_2 \le \dots \le x_{k} \le y_1$,~$y_2$. The poset $Y'_{k, 2}$ is the dual poset of $Y_{k, 2}$. The sum of the $k$ largest binomial coefficients of order $n$ is denoted by $\Sigma(n,k)$.
Let $\La^{\sharp}(n,\{Y_{k, 2}, Y'_{k, 2}\})$ be the size of the largest family $\F \subset 2^{[n]}$ that contains neither $Y_{k,2}$ nor $Y'_{k,2}$ as an induced subposet. Methuku and Tompkins proved that $\La^{\sharp}(n, \{Y_{2,2}, Y'_{2,2}\}) = \Sigma(n,2)$ for $n \ge 3$ and they conjectured the generalization that if $k \ge 2$ is an integer and $n \ge k+1$, then $\La^{\sharp}(n, \{Y_{k,2}, Y'_{k,2}\}) = \Sigma(n,k)$. On the other hand, it is known that $\La^{\sharp}(n, Y_{k,2})$ and $\La^{\sharp}(n, Y'_{k,2})$ are both strictly more than $\Sigma(n,k)$. In this paper, we introduce a simple discharging approach and prove this conjecture.  

 \bigskip\noindent \emph{Keywords:} forbidden subposets, discharging method, poset Tur\'an theory\\
\emph{ 2010 AMS Subject Classification:} 06A06
\end{abstract}

\section{Introduction}
	
The \emph{$n$-dimensional Boolean lattice}, denoted $\B_n$, is the partially ordered set (poset) $(2^{[n]}, \subseteq)$, where $[n]=\{1,\ldots,n\}$.  For any $0 \le i \le n$, let $\binom{[n]}{i} := \{A \subseteq [n] : \abs{A} =i\}$ denote the \emph{$i$th level} of the Boolean lattice.  Let $P$ be a finite poset and $\F$ be a family of subsets of $[n]$. We say that $P$ is contained in $\F$  as a {\em weak }\! subposet if there is an injection $\alpha : P \rightarrow \mathcal{F}$ satisfying $x_1 <_p x_2 \Longrightarrow \alpha(x_1)\subset \alpha(x_2)$ for all $x_1$,~$x_2\in P$.
$\F$ is called \emph{$P$-free} if $P$ is not contained in $\F$ as a weak subposet.  
We define the corresponding extremal function to be
$\La(n,P) := \max \{ \abs{\F} : \mathcal{F} ~\text{is $P$-free}\}$. Analogously, if $P$, $Q$ are two posets then, let $\La(n, \{P, Q\}) := \max \{ \abs{\F} : \mathcal{F} ~\text{is $P$-free and $Q$-free}\}$.

The linearly ordered poset on $k$ elements, $a_1 < a_2 < \ldots < a_k$, is called a {\em chain of length $k$}, and is denoted by $P_k$. 
Using this notation the well-known theorem of Sperner~\cite{Sper} can be stated as $\La(n, P_2) = {\binom {n} {\lfloor n/2 \rfloor}}$. Let us denote the sum of the $k$ largest binomial coefficients of order $n$ by $\Sigma(n,k)$.  Erd\H{o}s \cite{Erd} extended Sperner's theorem by showing that $\La(n, P_{k}) = \Sigma(n,k-1)$ with equality if and only if the family is union of $k-1$ largest levels of the Boolean lattice. Notice that, since any poset $P$ is a weak subposet of a chain of length $|P|$, Erd\H{o}s's theorem implies that
$\La(n,P) \le (|P|-1){\binom {n} {\lfloor n/2 \rfloor}}=O \left( \binom {n} {\lfloor n/2 \rfloor} \right ).$ Later many authors, including Katona and Tarj\'{a}n~\cite{KatTar}, Griggs and Lu~\cite{GLu}, and Griggs, Li, and Lu~\cite{GLL} studied various other posets (see the recent survey by Griggs and Li \cite{GriggsLi} for an excellent survey of all the posets that have been studied). Let $h(P)$ denote the height (maximum length of a chain) of~$P$. One of the first general results is due to Bukh who showed that if $T$ is a finite poset whose Hasse diagram is a tree of height $h(T) \ge 2$, then $\La(n,T)= (h(T)-1+O(1/n)){\binom {n} {\lfloor n/2 \rfloor}}$. The most notorious poset for which the asymptotic value of the extremal function is still unknown is the {\em diamond} $D_2$, the poset on $4$ elements with the relations $a<b,c<d$ where $b$ and $c$ are incomparable. The best known bound is $(2.20711+o(1)) {\binom {n} {\lfloor n/2 \rfloor}}$, due to Gr\'osz, Methuku, and Tompkins~\cite{GMT2}.

We say that $P$ is contained in $\F$ as an {\em induced subposet} if and only if there is an injection~$\alpha : P \rightarrow \mathcal{F}$ satisfying $x_1 <_p x_2 \Longleftrightarrow \alpha(x_1)\subset \alpha(x_2)$ for all $x_1$,~$x_2\in P$.
We say that $\F$ is {\em induced-$P$-free} if $P$ is not contained in $\F$ as an induced subposet. 
We define the corresponding extremal function as
$\La^{\sharp}(n,P) := \max \{ \abs{\F} : \mathcal{F} ~\text{is induced $P$-free}\}.$ Analogously, if $P$, $Q$ are two posets then let $ \La^{\sharp}(n,\{P, Q\}) := \max \{ \abs{\F} : \mathcal{F} ~\text{is induced $P$-free and induced $Q$-free}\}$.

Despite the considerable progress that has been made on forbidden weak subposets, little is known about forbidden induced subposets (except for $P_k$, where the weak and induced containment are equivalent).
The first results of this type are due to Carroll and Katona~\cite{CarK}, and due to Katona~\cite{Katona_survey}, showing
$\La^{\sharp}(n,V_{r})=\left(1+o(1)\right){\binom {n} {\lfloor n/2 \rfloor}}$ where $V_{r}$ is the $r$-fork poset ($x \le y_i$ for all $1 \le i \le r$). 
Boehnlein and Jiang~\cite{EdTao} generalized this by extending Bukh's result to induced containment of tree-shaped posets, $T$, proving
$\La^{\sharp}(n,T)= (h(T)-1+o(1)){\binom {n} {\lfloor n/2 \rfloor}}$. Only recently, Methuku and P\'alv\"olgyi \cite{MPal} showed that for every poset $P$, there is a constant $c_P$ depending only on $P$ such that $\La^{\sharp}(n,P) \le c_P{\binom {n} {\lfloor n/2 \rfloor}}$.

Even fewer exact results are known for forbidden induced subposets, which is the topic of this paper.   Katona and Tarj\'an \cite{KatTar} proved that $\La(n,\{V,\Lambda\}) = \La^{\sharp}\left(n,\{V,\Lambda\}\right) =  2 \binom{n-1}{\floor{\frac{n-1}{2}}}$, where $V$ and $\Lambda$ are the $2$-fork and its dual, the $2$-brush, respectively.  



Now we formally define the posets considered in this paper.

\begin{defn}
Let $k$,~$r \ge 2$ be integers. The $r$-fork with a $k$-shaft poset consists of $k+r$ elements $x_1$,$x_2,\dots,x_k$,$y_1$, $y_2, \ldots, y_{r-1}, y_r$ with $x_1 \le x_2 \le \dots \le x_k$ and $x_k \le y_i$ for all $1 \le i \le r$, and is denoted by  $Y_{k, r}$. Let $Y'_{k, r}$ denote the reversed poset of  $Y_{k, r}$, also called the dual poset of $Y_{k,r}$. 

For simplicity, we will write $Y_{k}$ and $Y'_{k}$ instead of $Y_{k, 2}$ and $Y'_{k, 2}$ respectively.
\end{defn}

The first result about $Y_{k,r}$ was due to Thanh \cite{Tran} who showed that 
$\La(n, Y_{k,r}) = (k + o(1)){\binom {n} {\lfloor n/2 \rfloor}}$. The lower order term in his upper bound was improved by De Bonis and Katona \cite{Krfork}.  Thanh also gave a construction showing that $\La(n, Y_{k,r}) > \Sigma(n, k)$. Methuku and Tompkins \cite{MCasey} showed that if one forbids both $Y_k$ and $Y'_k$, then an exact result can be obtained: $\La(n, \{Y_k, Y'_k\}) = \Sigma(n, k)$.

Using a cycle decomposition method, they also showed the following exact result for induced posets.

\begin{thm}[Methuku--Tompkins~\cite{MCasey}]  \label{thm:zero}
	If $n \ge 3$, then $\La^{\sharp}(n, \{Y_{2}, Y'_{2}\}) = \Sigma(n,2)$.
\end{thm}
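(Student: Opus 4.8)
The plan is to prove both inequalities, with all the substance in the upper bound. For the lower bound, observe that any family of height at most $2$ --- one containing no chain of three sets --- is automatically free of both $Y_2$ and $Y'_2$ as induced subposets, since each of these posets contains the chain $P_3$. Two consecutive middle levels of $\B_n$ form such a family, and by Erd\H{o}s's theorem $\La(n,P_3)=\Sigma(n,2)$, so $\La^{\sharp}(n,\{Y_2,Y'_2\})\ge\Sigma(n,2)$.

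The first real step is a local structural characterization that makes the forbidden-subposet condition easy to use. Given $\F$, call a set $B\in\F$ \emph{internal} if some member of $\F$ is a proper subset of $B$ and some member of $\F$ is a proper superset of $B$. I claim $\F$ is induced-$\{Y_2,Y'_2\}$-free if and only if, for every internal $B$, all members of $\F$ comparable to $B$ form a chain. Indeed, an induced copy of $Y_2$ is exactly a configuration $A\subsetneq B\subsetneq C$ and $B\subsetneq D$ with $C,D$ incomparable, whose centre $B$ is internal and has two incomparable supersets; dually an induced $Y'_2$ forces an internal set with two incomparable subsets. Conversely, if every internal set has a chain for its comparability neighbourhood, then no internal set can be the centre of either forbidden poset, and only an internal set can play that central role. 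This reduces the problem to controlling families in which every internal set is ``chain-like.''

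For the upper bound I would partition $\F$ into its set $M$ of minimal elements, its set $X$ of non-minimal maximal elements, and its set $I$ of internal elements. A three-term chain inside $M\cup X$ would have an internal middle term, which is impossible, so $M\cup X$ is $P_3$-free and $|M\cup X|\le\Sigma(n,2)$ by Erd\H{o}s's theorem. It therefore suffices to absorb the internal sets. I would fix a symmetric chain decomposition $\mathcal{D}$ of $\B_n$, for which $\Sigma(n,2)=\sum_{\mathcal{C}\in\mathcal{D}}\min(|\mathcal{C}|,2)$, and charge each $B\in I$ to a distinct slot of $\mathcal{D}$ left vacant by $M\cup X$. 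The mechanism is discharging: a symmetric chain $\mathcal{C}$ with $|\F\cap\mathcal{C}|\ge 3$ carries a long chain of $\F$ whose middle terms are internal, and the chain-like property then prevents $\F$ from meeting many of the chains that cross these internal sets, creating exactly the vacancies needed to pay for the excess.

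The hard part will be the discharging bookkeeping in this last step: quantifying, for each internal set, how many neighbouring symmetric chains it forces to be $\F$-sparse, and checking that the resulting deficits are disjoint across distinct internal sets, so that the excess never outweighs the savings. Because the bound is exact and is attained only by height-$2$ families, the entire difficulty is concentrated in showing that no family containing a chain of length $\ge 3$ can beat the two-level construction; the chain-like characterization is the tool that makes each such chain ``thin'' enough for this accounting to close.
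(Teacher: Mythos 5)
Your lower bound and your two structural observations are correct: the characterization (``$\F$ is induced-$\{Y_2,Y_2'\}$-free iff every internal set has a chain as its comparability neighbourhood'') is right, and so is the deduction that the minimal and maximal elements together form a $P_3$-free family of size at most $\Sigma(n,2)$. But the proof is not complete: the entire content of the theorem is in the step you defer, namely showing that each internal set can be charged to a vacancy that the family is forced to leave elsewhere, with the deficits disjoint across internal sets. You say yourself that this is ``the hard part,'' and nothing in the proposal establishes it. It is not a routine verification: the vacancies created by an internal set need not be local to it. For instance, $\F=\{\varnothing\}\cup\binom{[n]}{n-1}\cup\{[n]\}$ is induced-$\{Y_2,Y_2'\}$-free, every $(n-1)$-set in it is internal, and the one symmetric chain running from $\varnothing$ to $[n]$ meets $\F$ three times while the compensating empty chains sit far away near the middle levels; any charging rule must be global enough to reach them, and the claimed disjointness of deficits is exactly what has to be proved. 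Note also that the LYM-type bound $\sum_{F\in\F}\binom{n}{|F|}^{-1}\le 2$ that a clean per-chain accounting would yield is simply false when $\varnothing$ or $[n]$ lies in $\F$ (the example above has LYM sum $3$), so those cases need separate treatment, which your plan does not mention.

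For comparison: the paper does not prove this statement at all --- it is quoted from Methuku and Tompkins, who proved it by a chain decomposition of a cycle --- and the paper's own general-$k$ machinery (spines, the weight function $w(S,C)$, and Lemmas~\ref{lem:moreempty} and~\ref{lem:spine_sum}, summed over all full chains rather than a symmetric chain decomposition) is precisely the worked-out version of the bookkeeping you are gesturing at; even there, the argument only yields the LYM inequality under the hypothesis $\varnothing,[n]\notin\F$, and the remaining cases are handled by a separate induction and an equality analysis. So your outline is pointed in a plausible direction, but as written it stops before the theorem is proved.
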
 

Theorem \ref{thm:zero} strengthens the result of De Bonis, Katona, and Swanepoel \cite{DKS} stating that $\La(n,B) = \Sigma(n,2)$ where $B$ is the butterfly poset which consists of $4$ elements $a$, $b$, $c$,~$d$ with $a$,~$b \le c$,~$d$. Indeed if a family does not contain the butterfly as a subposet, then it contains neither $Y_{2}$ nor  $Y'_{2}$ as an induced subposet. However, a family might contain neither an induced $Y_2$ nor an induced $Y_2'$ while still containing a butterfly.

In Section~\ref{results}, we establish the following generalization of Theorem~\ref{thm:zero} by proving a conjecture from~\cite{MCasey}. 

\begin{thm}\label{thm:conjresult}
	If $k \ge 2$ is an integer and $n \ge k+1$, then $\La^{\sharp}(n, \{Y_k, Y'_k\}) = \Sigma(n,k)$.
\end{thm}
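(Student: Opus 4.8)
The plan is to establish the lower bound with an explicit extremal family and the upper bound by a discharging argument carried out on a symmetric chain decomposition of $\B_n$. For the lower bound I would take $\F$ to be the union of the $k$ middle levels of $\B_n$, which contains exactly $\Sigma(n,k)$ sets. Any chain $A_1 \subsetneq \cdots \subsetneq A_k$ lying in $k$ consecutive levels must use one set from each level, so its top set lies in the highest of these $k$ levels; hence nothing in $\F$ lies strictly above it and no induced $Y_k$ can be completed. Dually no induced $Y'_k$ occurs, so this family is $\{Y_k,Y'_k\}$-free and $\La^{\sharp}(n,\{Y_k,Y'_k\}) \ge \Sigma(n,k)$.

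For the upper bound I would fix a $\{Y_k,Y'_k\}$-free family $\F$ together with a symmetric chain decomposition $\{\mathcal{C}_j\}$ of $\B_n$, so that $\binom{n}{\lfloor n/2\rfloor}$ chains partition $2^{[n]}$ and a chain of length $\ell_j$ runs symmetrically about the middle level. Place one unit of charge on each set of $\F$, located on the chain containing it, so the load of $\mathcal{C}_j$ is $\abs{\F \cap \mathcal{C}_j}$ and the total charge is $\abs{\F}$. I would declare the capacity of $\mathcal{C}_j$ to be $\min(\ell_j,k)$; a short computation (each long symmetric chain meets exactly the $k$ middle levels, while each short one lies entirely within them) gives $\sum_j \min(\ell_j,k) = \Sigma(n,k)$. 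It then suffices to redistribute charge between chains, conserving the total, until every chain carries charge at most its capacity, whereupon $\abs{\F} = \sum_j(\text{final charge of }\mathcal{C}_j) \le \sum_j \min(\ell_j,k) = \Sigma(n,k)$.

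The structural input comes from the forbidden posets. For $A \in \F$ let $h^-(A)$ and $h^+(A)$ denote the lengths of the longest chains of $\F$ ending at $A$ from below and starting at $A$ going up, respectively. If $h^-(A)\ge k$ then $A$ is the top of a $k$-chain of $\F$, so the sets of $\F$ strictly above $A$ must be pairwise comparable, since two incomparable ones would complete an induced $Y_k$; dually, if $h^+(A)\ge k$ then the sets strictly below $A$ form a chain, by $Y'_k$-freeness. Consequently any chain $\mathcal{C}_j$ whose load exceeds $k$ carries an $\F$-chain of length greater than $k$, and near both ends of that chain $\F$ is branch-free, so the ``excess'' sets beyond the permitted capacity sit inside a region where $\F$ is thin and where many neighbouring symmetric chains are necessarily below capacity. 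The discharging rule would move each unit of excess from an over-full chain to such deficient chains.

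The hard part will be proving that this redistribution is always feasible: that the total deficiency available around a long $\F$-chain is at least its excess, and that no recipient chain is pushed above its own capacity. Phrased as a Hall-type or flow condition between the over-full and the deficient chains, this feasibility step is, I expect, the principal obstacle; the parity of $n$ and $k$, and the treatment of chains straddling the middle levels, are the main technical points that would need to be checked separately.
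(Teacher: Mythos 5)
Your lower bound is fine, the identity $\sum_j \min(\ell_j,k)=\Sigma(n,k)$ is the standard one from the symmetric-chain proof of Erd\H{o}s's theorem, and the structural observations (a set at the top of a $k$-chain of $\F$ has all members of $\F$ above it pairwise comparable, and dually) are correct and are indeed the structural input the paper uses. But the step you defer --- ``proving that this redistribution is always feasible'' --- is not a technical loose end; it is the entire content of the upper bound, and nothing in your sketch indicates how to certify the Hall/flow condition between over-full and deficient symmetric chains. A fixed symmetric chain decomposition gives you no canonical way to match the excess on a long chain with deficiencies ``nearby,'' and the branch-freeness you derive lives in the subset lattice, not along the combinatorial structure of the decomposition. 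Worse, your intended invariant (final load of each chain at most $\min(\ell_j,k)$) cannot be established without first excluding $\varnothing$ and $[n]$: for $k=2$ the family $\{\varnothing,[n]\}\cup\binom{[n]}{\lfloor n/2\rfloor}$ is induced $\{Y_2,Y_2'\}$-free, has LYM sum $3>k$, and puts load $3$ on the unique longest symmetric chain whose capacity is $2$; so any chainwise capacity argument must treat the extremes separately, which your proposal does not anticipate.

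The paper resolves exactly this obstacle by abandoning a fixed decomposition in favor of the multiset of all $n!$ maximal chains. It defines a \emph{spine} (a saturated chain whose endpoints are in $\F$ and which contains exactly $k-1$ members of $\F$), puts a weight $w(S,C)\in\{-1,0,+1\}$ on each spine--chain pair according to whether the maximal chain $C$ through $S$ meets $\F$ in exactly $k-1$ or at least $k+1$ sets, and proves \emph{locally, for each spine}, that the total weight is nonpositive (Lemma~\ref{lem:spine_sum}). The local inequality follows because $Y_k'$-freeness forces $\F\cap[\varnothing,A_1]$ to be a chain, and an explicit injection between permutations (Lemma~\ref{lem:moreempty}) shows at least half of the full chains in $[\varnothing,A_1]$ avoid it; the product structure $a_1b_1\le a_0b_0$ then closes the count. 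Double counting over spines converts this into the LYM-type inequality $\sum_{F\in\F}\binom{n}{|F|}^{-1}\le k$, valid only when $\varnothing,[n]\notin\F$; the remaining boundary cases are handled by induction on $k$ (when both extremes are present) and by an analysis of the equality case of the LYM inequality (when exactly one is present). If you want to salvage your plan, the lesson is that averaging over all maximal chains replaces the global matching problem by a one-spine-at-a-time inequality that can actually be proved; as written, your proposal has a genuine gap at its central step.
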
 

Note that forbidding only one of $Y_k$ and $Y'_k$ is not enough to obtain an exact result. Indeed, again by Thanh's construction \cite{Tran} we have $\La^{\sharp}(n, Y_k) > \Sigma(n,k)$ and $\La^{\sharp}(n, Y'_k) > \Sigma(n,k)$.

We further obtain the following LYM-type inequality if we assume $\varnothing$ and $[n]$ are not in our family.

\begin{thm}\label{thm:firstLYM}
Let $k \ge 2$ be an integer and $n \ge k+1$. If $\F \subset 2^{[n]}$ contains neither $Y_k$ nor $Y'_k$ as an induced subposet and $\varnothing$,~$[n] \notin \F$, then
	\begin{displaymath}
	\sum_{F\in \F} \binom{n}{\abs{F}}^{-1} \le k.
	\end{displaymath}
	In particular, $\abs{\F}\leq\Sigma(n,k)$.
\end{thm}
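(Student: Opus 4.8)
The plan is to pass to a Lubell-type (chain-counting) formulation and then control the count by a discharging argument that exploits the two forbidden posets symmetrically. Double-counting incidences between $\F$ and the $n!$ maximal chains of $\B_n$ (from $\varnothing$ to $[n]$) gives the identity
\[
\sum_{F\in\F}\binom{n}{\abs{F}}^{-1}=\frac{1}{n!}\sum_{\mathcal{C}}\abs{\mathcal{C}\cap\F},
\]
so it suffices to show $\sum_{\mathcal{C}}\abs{\mathcal{C}\cap\F}\le k\cdot n!$, i.e.\ that the average number of members of $\F$ on a maximal chain is at most $k$. If the longest chain in $\F$ has length at most $k$, then every maximal chain meets $\F$ in at most $k$ members and we are done by the generalized LYM inequality; the difficulty is that $\F$ may contain arbitrarily long chains, so the naive per-chain bound fails and charge must be redistributed.

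The engine of the redistribution is two structural lemmas. From induced-$Y_k$-freeness: if $D_1\subsetneq\cdots\subsetneq D_k$ is a chain in $\F$, then $\{E\in\F:E\supsetneq D_k\}$ is itself a chain, since two incomparable members above $D_k$ would complete an induced $Y_k$. Dually, from induced-$Y'_k$-freeness, if $D_1\subsetneq\cdots\subsetneq D_k$ is a chain in $\F$, then $\{E\in\F:E\subsetneq D_1\}$ is a chain. Consequently, if a maximal chain $\mathcal{C}$ meets $\F$ in members $A_1\subsetneq\cdots\subsetneq A_m$ with $m\ge k+1$, then (taking $D_i=A_i$ for $i\le k$) all members of $\F$ above $A_k$ form a single chain, and dually all members of $\F$ below $A_{m-k+1}$ form a single chain. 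Thus a chain carrying a surplus of members is forced to be \emph{thin} in $\F$ both above its top member and below its bottom member: no branching is available there. The hypotheses $\varnothing,[n]\notin\F$ enter precisely here, guaranteeing $1\le\abs{A_1}$ and $\abs{A_m}\le n-1$, so that there is genuine room above $A_m$ and below $A_1$ in which to reroute; these hypotheses are in fact necessary, since adjoining $\varnothing$ or $[n]$ to the $k$ middle levels keeps the family induced-$\{Y_k,Y'_k\}$-free while adding exactly $1$ to the left-hand side.

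With these lemmas in hand, I would run the discharging as follows. Give each maximal chain the initial charge $\abs{\mathcal{C}\cap\F}$, and move the surplus $\abs{\mathcal{C}\cap\F}-k$ off each over-full chain onto maximal chains obtained by rerouting $\mathcal{C}$ through the forced-thin regions above $A_m$ and below $A_1$; because $\F$ is a single chain in those regions, a rerouted chain that diverges from it avoids those members and so has spare capacity below $k$. One then checks that charge is conserved and that no chain ends with charge exceeding $k$, which yields $\sum_{\mathcal{C}}\abs{\mathcal{C}\cap\F}\le k\cdot n!$ and hence $\sum_{F\in\F}\binom{n}{\abs{F}}^{-1}\le k$. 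Finally, the bound $\abs{\F}\le\Sigma(n,k)$ follows from a short linear-programming (fractional-knapsack) argument: maximizing $\sum_i w_i$ subject to $\sum_i w_i\binom{n}{i}^{-1}\le k$ and $0\le w_i\le\binom{n}{i}$ is achieved by filling the $k$ largest levels, giving exactly $\Sigma(n,k)$.

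The main obstacle I anticipate is the design and bookkeeping of the discharging rule itself: one must route the surplus from every over-full chain so that the recipients are provably under-full and no recipient is overloaded, and this requires using the $Y_k$ and $Y'_k$ lemmas together rather than separately. A purely one-sided scheme — layering $\F$ by longest chain from below and bounding the top layers — fails, because the union of the high layers can have Lubell mass strictly greater than $1$; so the two forbidden posets must be balanced against one another. A further subtlety is that chains of intermediate surplus length (between $k+1$ and $2k-2$ members), for which no single member is simultaneously the top of a $k$-chain and the bottom of a $k$-chain, have no ``doubly thin'' member and so must be handled by rerouting at the top and at the bottom independently.
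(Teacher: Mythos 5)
Your overall strategy---rewriting the left-hand side as $\frac{1}{n!}\sum_{C}\abs{C\cap\F}$ and then discharging the surplus of chains carrying more than $k$ members of $\F$ onto chains carrying fewer, using the fact that induced $Y_k$- and $Y'_k$-freeness force $\F$ to form a chain above the top and below the bottom of any $k$-chain of $\F$---is exactly the paper's strategy, and your two structural lemmas are correct. But the proof is not complete: the entire content of the theorem lies in the discharging rule and its verification, and you explicitly leave both as a plan (``one then checks that charge is conserved and that no chain ends with charge exceeding $k$''). Two concrete ingredients are missing. First, a local unit of discharging. The paper routes charge not between whole maximal chains but through \emph{spines}: saturated subchains containing exactly $k-1$ consecutive members of $\F$, beginning and ending at members of $\F$. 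A maximal chain with $k+x$ members of $\F$ deposits $+1$ on exactly $x$ of its spines (the ``interior'' windows), a chain with exactly $k-1$ members withdraws $1$ from its unique spine, and summing over spines reproduces an upper bound for $\sum_C(\abs{C\cap\F}-k)$ with no double counting. Your scheme of rerouting each over-full chain ``at the top and at the bottom independently'' has no such accounting; in particular it is unclear how a chain with a large surplus (say $2k$ members of $\F$) distributes its excess without overloading recipients, which is precisely the bookkeeping you flag as the obstacle.

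Second, and more seriously, knowing that a rerouted chain ``has spare capacity below $k$'' is not enough; you must show that there are \emph{enough} under-full chains to absorb the surplus. The paper supplies this with a counting lemma (Lemma~\ref{lem:moreempty}): if $\mathcal{G}$ is a chain of sets, one per size, inside an interval, then at least as many full chains of that interval avoid $\mathcal{G}$ as meet it, proved by an explicit injection on the corresponding permutations. Since the maximal chains through a fixed spine factor as a product of a full chain below its bottom element and one above its top element, this yields $a_1b_1\le a_0b_0$, i.e., the over-full chains through a spine are outnumbered by the chains with exactly $k-1$ members through that spine. This quantitative step is the heart of the proof and your sketch does not supply a substitute for it. Finally, a side remark in your write-up is false: the $k$ middle levels together with $\varnothing$ do \emph{not} remain induced-$Y_k$-free (a $(k-1)$-chain through the lower $k-1$ of those levels, preceded by $\varnothing$ and capped by two incomparable sets of the top level, is an induced $Y_k$). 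The hypotheses $\varnothing,[n]\notin\F$ are indeed necessary, but a correct witness is, e.g., the $k-1$ middle levels together with both $\varnothing$ and $[n]$, which has Lubell mass $k+1$.
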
 

\section{Preliminaries}\label{prelim}

The following terminology will be used to prove Theorems~\ref{thm:conjresult} and \ref{thm:firstLYM}. Let $\F$ be a family of subsets of $[n]$ which is induced $Y_k$-free and induced $Y'_k$-free. For sets $U$,~$V \subseteq [n]$, let the {\em interval $[U,V]$} denote the Boolean lattice induced by the collection of all sets that contain $U$ and are contained in $V$.   A chain $C$ where  $C=\{A_0, \dots, A_n\}$ and $\varnothing=A_0\subset A_1\subset\cdots \subset A_n=[n]$ is called a \emph{full chain} or a \emph{maximal chain}.

A \emph{spine} $S$ is a chain $A_1\subset A_2\subset \cdots \subset A_{\ell}$ such that $\abs{A_{i+1} \setminus A_{i}} = 1$  for $1\leq i\leq \ell-1$ where there are exactly $k-1$ members of $\F$ in $\{ A_1, \dots, A_{\ell}\}$ and where $A_1$,~$A_{\ell}\in\F$. Note that a spine may contain elements not from $\F$.

Let $\mathcal{C}$ be the set of all full chains and let $\mathcal{S}$ be the set of all spines. We say that a full chain $C \in \mathcal{C}$ is \emph{associated with a spine} $S \in \mathcal{S}$ or that {\em $C$ contains $S$ as a spine} if either 
\begin{enumerate}
	\item $C$ has exactly $k-1$ members of $\F$, which we name $F_1, \dots, F_{k-1}$. In this case, $C$ is associated with the spine that is a subchain of $C$ from $F_1$ to $F_{k-1}$; or 
	\item $C$ has exactly $k+x$ elements of $\F$ (where $x\geq 1$), which we name $F_1, \dots, F_{k+x}$. In this case, $C$ is associated with $x$ spines, namely $S_{F_{i}}$ for $2\leq i\leq x+1$, where $S_{F_i}$ is the spine that is a subchain of $C$ from $F_{i}$ to $F_{i+k-2}$.  (Notice that a chain~$C$ with at least $k+1$ elements of~$\F$ is \emph{not} associated with the spines that correspond to the first $k-1$ elements of~$\F \cap C$ and to the last $k-1$ elements of~$\F\cap C$.)
\end{enumerate}
Let $\spine(C)$ denote the set of all spines that $C$ contains. More precisely, $$\spine(C) := \{S : C \text{ contains }  S \text{ as a spine} \}.$$

\subsection{Overview of the discharging method} \label{overview}
In order to prove Theorem~\ref{thm:firstLYM} we use discharging arguments and Lemma~\ref{lem:spine_sum} below. We then prove Theorem~\ref{thm:conjresult} by using Theorem~\ref{thm:firstLYM} and induction on $k$. 

Before proving Lemma~\ref{lem:spine_sum}, we need the following straightforward counting lemma, the proof of which we provide for completeness.  
\begin{lem}\label{lem:moreempty}
	Let $n\geq 2$. 
	If $\mathcal{G} \subseteq \{\{1\}, \{1,2\}, \{1,2,3\}, \ldots, \{1,2,3.\ldots,n-1\}\}$, then the number of full chains in $2^{[n]}$ containing no member of $\mathcal{G}$ is at least the number of full chains that contain at least one member of $\mathcal{G}$.
\end{lem}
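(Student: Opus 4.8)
The plan is to pass from full chains to permutations of $[n]$ and then exhibit one explicit injection. Identify a full chain $C=\{A_0\subset\cdots\subset A_n\}$ with the ordering $\sigma=(\sigma_1,\dots,\sigma_n)$ of $[n]$ in which its elements are added, i.e.\ $A_i\setminus A_{i-1}=\{\sigma_i\}$. Under this bijection the chain contains the set $\{1,\dots,j\}$ precisely when $\{\sigma_1,\dots,\sigma_j\}=\{1,\dots,j\}$; call such a $j$ a \emph{breakpoint} of $\sigma$, and call $\sigma$ \emph{decomposable} if it has a breakpoint in $[n-1]$ and \emph{indecomposable} otherwise. Writing $T=\{j:\{1,\dots,j\}\in\G\}\subseteq[n-1]$, a chain contains a member of $\G$ iff $\sigma$ has a breakpoint in $T$. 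Hence the chains hitting $\G$ form a subset of the decomposable permutations, while the chains avoiding $\G$ form a superset of the indecomposable ones, so it suffices to show that there are at least as many indecomposable as decomposable permutations of $[n]$.

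First I would try the obvious move of ``swapping the two halves'' of a decomposable permutation (sending the low first block to the end); this does give an indecomposable permutation but is \emph{not} injective, since the split point cannot be recovered — a permutation can have several suffixes whose value set is an initial segment. The fix is to move only the two extreme values. Define $\sigma\mapsto\sigma'$ by interchanging the values $1$ and $n$ (equivalently, swapping the two positions at which $1$ and $n$ occur). Because this is an involution on all of the symmetric group, it is automatically injective, so the whole content of the argument is to check that it sends decomposable permutations into indecomposable ones. For that, let $b\in[n-1]$ be the smallest breakpoint of a decomposable $\sigma$, so $\{\sigma_1,\dots,\sigma_b\}=\{1,\dots,b\}$. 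Then the value $1$ sits in some position $p\le b$ while $n$ sits in some position $q\ge b+1>p$; after the swap, $n$ occupies the early position $p\le b$ and $1$ occupies the late position $q$. For any $1\le j\le n-1$ I would check two cases: if $j\ge p$ then $\{\sigma'_1,\dots,\sigma'_j\}$ contains $n$ and so cannot equal $\{1,\dots,j\}$; if $j<p$ then (as $p\le b<q$) the first $j$ entries are untouched by the swap and already omit the value $1$, which lay at position $p>j$, so again the set is not $\{1,\dots,j\}$. Thus $\sigma'$ has no breakpoint in $[n-1]$ and is indecomposable.

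Combining the pieces gives the chain of inequalities $\#\{\text{hitting}\}\le\#\{\text{decomposable}\}\le\#\{\text{indecomposable}\}\le\#\{\text{avoiding}\}$, which is exactly the statement. The one genuinely delicate point is the choice of injection: the verification that interchanging $1$ and $n$ annihilates \emph{every} breakpoint simultaneously — rather than a local swap, which kills only one breakpoint and leaves either the map non-injective or the image still decomposable — is the crux, and it is precisely what the two-case check above secures.
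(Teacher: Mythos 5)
Your proof is correct, and it takes a genuinely different route from the paper's, although both pass through the same encoding of full chains as permutations of $[n]$. The paper builds a $\mathcal{G}$-dependent map directly from the hitting chains to the avoiding chains: it locates the last member $\{1,\dots,j\}$ of $\mathcal{G}$ lying on the chain and swaps the entry in position $j+1$ (whose value is necessarily at least $j+1$) with the entry equal to $1$; the cost is that injectivity of this map is not automatic, and the paper leaves its verification implicit. You instead reduce to the extremal case $\mathcal{G}=\{\{1\},\{1,2\},\dots,\{1,\dots,n-1\}\}$ via the monotonicity observations (hitting chains are decomposable, indecomposable chains are avoiding), and then apply the single global involution that exchanges the values $1$ and $n$. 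Injectivity is then free, and the only substantive check---that this involution sends every decomposable permutation to an indecomposable one---is exactly your two-case analysis, which is sound: writing $p$ for the position of $1$ and $q$ for the position of $n$, any breakpoint $b\in[n-1]$ forces $p\le b<q$, so a prefix of length $j\ge p$ acquires the value $n$ and cannot be $\{1,\dots,j\}$, while a prefix of length $j<p$ is untouched by the swap and omits the value $1$. What your approach buys is a transparent injectivity argument and a formally stronger conclusion (indecomposable permutations are at least as numerous as decomposable ones, from which the lemma follows for every $\mathcal{G}$ at once); what the paper's buys is a one-step map with no reduction, at the price of a less obvious injectivity claim.
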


\begin{proof}
Let the set of chains that contain at least one member of $\mathcal{G}$ be $X$ and the set of chains that contain no member of $\mathcal{G}$ be $Y$. To show that $\abs X \le \abs Y$ we will construct an injection from $X$ to $Y$. Consider any chain $C \in X$. Let $C$ be $\varnothing \subset \{x_1\} \subset \{x_1,x_2\} \subset \{x_1,x_2,x_3\} \subset \ldots \subset \{x_1,x_2,x_3,\ldots,x_{n}\}$. For simplicity, we will say the permutation corresponding to $C$ is $x_1x_2x_3 \cdots x_{n}$. 

If $\{x_1,x_2,\ldots,x_j\}$ is the last set from $\mathcal{G}$ in $C$ and $x_i = 1$, then $x_1 x_2 \cdots x_j$ is a permutation of~$\{1,2,\ldots,j\}$. Hence, $x_{j+1} \geq j+1$ and $1 \le i \le j$. Let us consider the chain $C'$ corresponding to the permutation 
$$x_1 x_2 \cdots x_{i-1} x_{j+1} x_{i+1} x_{i+2} \cdots x_j x_i x_{j+2} \cdots  x_{n} , $$ 
obtained by swapping $x_{j+1} $ with $x_i$ in the permutation corresponding to $C$. If $C'$ contains the set $\{1,2,\ldots,j+1\}$, then it must be the case that $x_{j+1}=j+1$. Thus, $C$ contains the set $\{1,2,\ldots,j+1\}$, which contradicts the maximality of $j$. Therefore, under this map, the full chain $C'$ does not contain any member of $\mathcal{G}$.  If we map $C \in X$ to $C' \in Y$ in this way, the map is an injection, as desired.
\end{proof}


For the discharging step, we start by placing a weight on a spine depending on the chains that contain it. More precisely, if $S \in \mathcal{S}$ is a spine and $C \in \C$ is a full chain, then we define a weight function~$w(S, C)$ as follows. 
\begin{align*}
	w(S, C) =
	\begin{cases}
		1, &\text{ if  $S \in \spine(C)$ and $C$ contains at least $k+1$ members of $\F$,} \\
		-1, &\text{ if $S \in \spine(C)$ and $C$ contains exactly $k-1$ members of $\F$,} \\
		0, &\text{ otherwise}.
	\end{cases}
\end{align*}

Note that if $\F = \Sigma(n, k)$, then $\displaystyle\sum_{\substack{C \in \C \\ S \in\spine(C)}} w(S, C) = 0$.

\begin{lem}
	\label{lem:spine_sum}
	Let $k\geq 2$ be an integer and $n\geq k+1$. Let $\F$ be a family in $\B_n$ with no induced $Y_k$ and no induced $Y'_k$ such that $\varnothing$,~$[n]\not\in\F$. Let $\mathcal{S}$ denote the set of spines of $\F$ and let $\C$ denote the set of full chains in $\B_n$. For any $S \in \mathcal{S}$, $$\sum_{\substack{C \in \C \\ S \in\spine(C)}} w(S, C) \le 0.$$
\end{lem}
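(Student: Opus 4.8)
The plan is to fix a spine $S\in\mathcal{S}$ with bottom and top $\F$-endpoints $B$ and $T$ (that is, $B=A_1$ and $T=A_\ell$ in the notation of the spine definition), and to rewrite the sum as a difference of two products. First I would observe that any full chain $C$ with $S\in\spine(C)$ must contain every set of $S$: since $S$ is saturated and $C$ has exactly one set of each size, the sets of $C$ of sizes $|B|,\dots,|T|$ are forced to coincide with $S$. Hence every such $C$ splits uniquely as a saturated chain from $\varnothing$ to $B$, followed by $S$, followed by a saturated chain from $T$ to $[n]$. Translating the weight function through this decomposition, $w(S,C)=+1$ exactly when the lower part contains a member of $\F$ strictly below $B$ and the upper part contains a member of $\F$ strictly above $T$ (so $C$ has at least $k+1$ members of $\F$ and $S=S_{F_i}$ for an admissible $i$), while $w(S,C)=-1$ exactly when neither happens (so $C$ has exactly $k-1$ members of $\F$); all remaining chains contribute $0$. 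Writing $b_+,b_0$ for the numbers of saturated chains $\varnothing\to B$ that do, respectively do not, meet $\F$ strictly below $B$, and $t_+,t_0$ for the analogous counts for saturated chains $T\to[n]$, the sum becomes $b_+t_+-b_0t_0$, and the goal reduces to proving $b_+t_+\le b_0t_0$.

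The heart of the argument is a structural dichotomy coming from the forbidden posets. Let $\F_B=\{E\in\F:E\subsetneq B\}$ and $\F_T=\{F\in\F:F\supsetneq T\}$, and let $B=G_1\subsetneq G_2\subsetneq\cdots\subsetneq G_{k-1}=T$ be the $k-1$ members of $\F$ lying on $S$. I claim that if $\F_T\neq\varnothing$ then $\F_B$ is a chain: if instead $E_1,E_2\in\F_B$ were incomparable, then picking any $F\in\F_T$, the $k+2$ distinct sets $E_1,E_2\subsetneq G_1\subsetneq\cdots\subsetneq G_{k-1}\subsetneq F$ realize $E_1,E_2$ as two incomparable minimal elements below the $k$-chain $G_1,\dots,G_{k-1},F$, i.e.\ an induced $Y'_k$, a contradiction. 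Dually, if $\F_B\neq\varnothing$ then $\F_T$ is a chain, since two incomparable sets strictly above $T$ together with any $E\in\F_B$ and the chain $G_1,\dots,G_{k-1}$ would give an induced $Y_k$. This is the step I expect to be the main obstacle: one must extract exactly the right $k+2$ sets and verify that the induced relations are precisely those of $Y'_k$ (resp.\ $Y_k$), using that both endpoints of $S$ belong to $\F$ so that the relevant chain really has length $k$.

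Finally I would conclude by casework together with Lemma~\ref{lem:moreempty}. If $\F_B=\varnothing$ then $b_+=0$ and $b_+t_+=0\le b_0t_0$; symmetrically if $\F_T=\varnothing$ then $t_+=0$ and we are done. Otherwise both are nonempty, so by the dichotomy both $\F_B$ and $\F_T$ are chains. Viewing the interval $[\varnothing,B]$ as the Boolean lattice on $B$ and relabelling its ground set so that the nested members of $\F_B$ become initial segments $\{1\},\{1,2\},\dots$ (each of size between $1$ and $|B|-1$, since $\varnothing\notin\F$ and every member of $\F_B$ is properly contained in $B$), Lemma~\ref{lem:moreempty} gives that at least as many saturated chains $\varnothing\to B$ avoid $\F_B$ as meet it, i.e.\ $b_+\le b_0$; the dual statement applied to $[T,[n]]$ gives $t_+\le t_0$. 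As all four quantities are nonnegative, $b_+t_+\le b_0t_0$, which is exactly $\sum_{C:\,S\in\spine(C)}w(S,C)\le 0$, completing the proof.
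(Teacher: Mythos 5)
Your proof is correct and follows essentially the same route as the paper: you show the weight sum factors as $b_+t_+-b_0t_0$, use an induced $Y'_k$ (resp.\ $Y_k$) built from two incomparable sets below $B$ (resp.\ above $T$), the $k-1$ members of $\F$ on $S$, and one member of $\F$ on the other side to force $\F_B$ and $\F_T$ to be chains, and then apply Lemma~\ref{lem:moreempty} after relabelling to get $b_+\le b_0$ and $t_+\le t_0$. The only cosmetic difference is that you split into cases according to whether $\F_B$ or $\F_T$ is empty, whereas the paper first disposes of the case where no associated chain has at least $k+1$ members of $\F$; these are equivalent.
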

\begin{proof}
Let a spine $S$ be the chain $A_1\subset A_2\subset \cdots \subset A_{\ell}$ where $\abs{A_{i+1} \setminus A_{i}} = 1$  for $1\leq i\leq \ell-1$. (Recall that, by definition of a spine, there are exactly $k-1$ members of $\F$ in $\{ A_1, \dots, A_{\ell}\}$ and that $A_1$,~$A_{\ell} \in \F$.) If all the chains $C \in \C$ that contain $S$ as a spine have at most $k$ members of $\F$ then since $w(S, C) \in \{0,-1\}$ for each of these chains, our lemma follows trivially. 
Therefore, we may assume that there is a chain $C \in \C$ that contains $S$ as a spine and has at least $k+1$ members of $\F$; such a chain $C$ must have sets $P$,~$Q \in \F$ with $P \subset A_1$ and $A_{\ell} \subset Q$. 

If two sets $A$,~$B \in \F$ are unrelated to each other and $A$,~$B \subset A_1$ then we have an induced copy of $Y'_k$ consisting of $A$,~$B$, the $k-1$ members of $\F$ in $S$, and $Q$. Therefore, $\F \cap [\varnothing, A_1]$ induces a chain $\mathcal{G}_1$.  By symmetry, $\F \cap [A_{\ell}, [n]]$ induces a chain $\mathcal{G}_2$ as well. Since by assumption $\varnothing$,~$[n] \not \in \F$, the chains $\mathcal{G}_1\setminus \{A_1\}$ and $\mathcal{G}_2 \setminus \{A_{\ell}\}$ may be extended to chains that satisfy the hypotheses of Lemma \ref{lem:moreempty} for $[\varnothing, A_1]$ and $[A_{\ell}, [n]]$. 

Therefore, the number $a_0$ of full chains in $[\varnothing, A_1]$ containing no member of $\mathcal{G}_1\setminus \{A_1\}$ is at least the number $a_1$ of full chains in $[\varnothing, A_1]$ that contain a member of $\mathcal{G}_1\setminus \{A_1\}$. Similarly, the number $b_0$ of full chains in $[A_{\ell}, [n]]$ containing no member of $\mathcal{G}_2\setminus \{A_{\ell}\}$ is at least the number $b_1$ of full chains in $[A_{\ell}, [n]]$ that contain a member of $\mathcal{G}_2\setminus \{A_{\ell}\}$. Now notice that the number of chains $C \in \C$  associated with spine~$S$ that have exactly $k-1$ members of $\F$ is $a_0\cdot b_0$ and the number of chains $C \in \C$ associated with spine~$S$ that have at least $k+1$ members of $\F$ is $a_1\cdot b_1$. Therefore, since $a_1 \le a_0$ and $b_1 \le b_0$, 
\[
\sum_{\substack{C \in \C \\ \spine(C)\in S}} w(S, C) =  a_1\cdot b_1 - a_0\cdot b_0 \le 0. \qedhere
\]
\end{proof}

\section{Proofs of Theorem \ref{thm:conjresult} and Theorem \ref{thm:firstLYM}}\label{results}

First we use a folklore lemma that establishes an inequality very similar to the LYM inequality. 
A proof of this lemma occurs in~\cite{tompkinsthesis} as part of a proof of Erd\H{o}s' theorem.
Recall that $\Sigma(n,k)$ denotes the sum of the sizes of the largest $k$ levels in the Boolean lattice $2^{[n]}$.

\begin{lem}[See~{\cite[Lemma 1]{tompkinsthesis}}]\label{lem:kLYM}
	If $\F\subseteq 2^{[n]}$ satisfies 
	\begin{equation}\label{eq:kLYM}
		\sum_{F \in \F} \binom{n}{\abs F}^{-1}\leq k ,
	\end{equation}
	then $|\F|\leq\Sigma(n,k)$.
\end{lem}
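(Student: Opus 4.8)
The plan is to reduce this to a one-variable-per-level linear optimization and then settle that optimization by a rearrangement inequality. First I would group $\F$ according to the sizes of its members: writing $m_i := \abs{\F \cap \binom{[n]}{i}}$ for $0 \le i \le n$, we have $\abs{\F} = \sum_{i=0}^{n} m_i$, while the hypothesis \eqref{eq:kLYM} reads $\sum_{i=0}^{n} m_i \binom{n}{i}^{-1} \le k$, and the box constraints $0 \le m_i \le \binom{n}{i}$ hold automatically. Setting $t_i := m_i \binom{n}{i}^{-1}$ (the fraction of the $i$th level that is used), each $t_i$ lies in $[0,1]$, the hypothesis becomes $\sum_i t_i \le k$, and the quantity to be bounded becomes $\abs{\F} = \sum_i t_i \binom{n}{i}$. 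Thus it suffices to prove the purely numerical statement: if $t_0, \ldots, t_n \in [0,1]$ satisfy $\sum_i t_i \le k$, then $\sum_i t_i \binom{n}{i} \le \Sigma(n,k)$.

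This is a fractional knapsack in which every level has the same ``size'' $1$ in the budget constraint and value $\binom{n}{i}$, so greedily filling the levels of largest binomial coefficient first should be optimal; filling the $k$ largest levels completely costs exactly $k$ and yields $\Sigma(n,k)$. To make this rigorous I would relabel the binomial coefficients $\binom{n}{0}, \ldots, \binom{n}{n}$ in weakly decreasing order as $c_1 \ge c_2 \ge \cdots \ge c_{n+1}$, with corresponding weights $t_1, \ldots, t_{n+1} \in [0,1]$, so that $\Sigma(n,k) = c_1 + \cdots + c_k$ and the target inequality becomes $\sum_i t_i c_i \le \sum_{i=1}^{k} c_i$. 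The entire content of the lemma lives in this inequality: the reduction above is bookkeeping, and the only genuinely delicate point is verifying that the greedy choice is optimal over all admissible weight vectors, which I would handle by a single pivot computation around the threshold value $c_k$ rather than by an exchange argument.

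Concretely, I would write the difference as
\[
\sum_{i=1}^{n+1} t_i c_i - \sum_{i=1}^{k} c_i = \sum_{i=k+1}^{n+1} t_i c_i - \sum_{i=1}^{k} (1 - t_i) c_i,
\]
then bound $c_i \le c_k$ for $i > k$ (using $t_i \ge 0$) and $c_i \ge c_k$ for $i \le k$ (using $1 - t_i \ge 0$), which yields the upper bound
\[
c_k \Bigl( \sum_{i=k+1}^{n+1} t_i - \sum_{i=1}^{k} (1 - t_i) \Bigr) = c_k \Bigl( \sum_{i=1}^{n+1} t_i - k \Bigr) \le 0,
\]
the last inequality following from $\sum_i t_i \le k$ together with $c_k \ge 0$. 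This gives $\sum_i t_i c_i \le \Sigma(n,k)$, and hence $\abs{\F} \le \Sigma(n,k)$, as required.
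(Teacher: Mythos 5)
Your proof is correct. Note that the paper does not actually prove this lemma---it is quoted as folklore with a citation to Lemma~1 of Tompkins's thesis---so there is no in-paper argument to compare against; your write-up is a valid self-contained substitute for that citation, and it follows what is essentially the standard route: partition $\F$ by level, pass to the fill fractions $t_i\in[0,1]$, and verify that the greedy (top-$k$-levels) solution of the resulting fractional knapsack is optimal via the single threshold estimate $c_i\le c_k$ for $i>k$ and $c_i\ge c_k$ for $i\le k$. The pivot computation is clean and complete. One pedantic point worth a half-sentence in a final version: your argument presupposes $k\le n+1$ so that $c_k$ exists; this always holds where the lemma is applied (the paper assumes $n\ge k+1$), and if $k>n+1$ the conclusion is vacuous anyway since $\abs{\F}\le 2^n=\Sigma(n,n+1)$.
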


\begin{pfct}{Theorem~\ref{thm:firstLYM}}  Observe that by Lemma~\ref{lem:spine_sum},
\begin{equation}\label{one}
	\sum_{S \in \mathcal{S}}\sum_{\substack{C \in \C \\ \spine(C)\ni S}} w(S, C) \le 0.
\end{equation}

Now notice that
\begin{equation}\label{two}
	\sum_{S \in \mathcal{S}}
	\sum_{\substack{C \in \C \\ \spine(C)\ni S}} 
	w(S, C) = 
	\sum_{C \in \C }
	\sum_{\substack{S \in \mathcal{S}  \\ S\in\spine(C)}} w(S, C)
\end{equation}
and that for any $C \in \C$, we have 
\begin{equation*}
	\sum_{\substack{S \in \mathcal{S}  \\ S\in\spine(C)}} w(S, C) = \abs{\F \cap C} - k.
\end{equation*}
Therefore, the right-hand side of \eqref{two} becomes 
\begin{equation}\label{four}
	\sum_{C \in \C } (\abs{\F \cap C} - k) = \sum_{F \in \F} \abs F! \cdot (n - \abs F)! - k\cdot n!.
\end{equation}

So by \eqref{one} and \eqref{four}, we have
\begin{equation*}
	\sum_{F \in \F} \abs F! \cdot (n - \abs F)! - k\cdot n! \le 0.
\end{equation*}
After rearranging, we obtain $\sum_{F \in \F} \binom{n}{\abs F}^{-1}\leq k$.
Lemma~\ref{lem:kLYM} gives that $\abs{\F}\leq\Sigma(n,k)$, proving Theorem~\ref{thm:firstLYM}.
\end{pfct}

\begin{pfct}{Theorem~\ref{thm:conjresult}}  The statement of Theorem~\ref{thm:conjresult} is true for $k=2$ (base case) due to Theorem~\ref{thm:zero}. 

If neither $\varnothing$ nor $[n]$ are in $\F$, then we may apply Theorem~\ref{thm:firstLYM} directly to obtain $\abs{\F}\leq\Sigma(n,k)$.

If both $\varnothing$ and $[n]$ are in $\F$, then $\F \setminus \{\varnothing,[n]\}$ is induced $Y_{k}$-free and induced $Y'_{k}$-free. Therefore, it has size at most $\Sigma(n,k-1)$ by the induction hypothesis. Since $2 + \Sigma(n,k-1) \le \Sigma(n,k)$ for $n \ge k+1$ and $k\ge 2$, we are done. 

Now, without loss of generality, suppose that $\varnothing \in \F$ and $[n] \not\in \F$. Now consider the family~$\F' := \F \setminus \{\varnothing\}$. By Theorem~\ref{thm:firstLYM}, we have 
\begin{equation}\label{equality_case}
	\sum_{F \in \F'} \binom{n}{\abs F}^{-1}\leq k
\end{equation}
and $\abs{\F'} \le \Sigma(n,k)$, by Lemma~\ref{lem:kLYM}. 

Now suppose $\abs{\F'} = \Sigma(n,k)$.  (Otherwise, we are done.)  A consequence of the proof of Lemma~\ref{lem:kLYM} is that, in order for equality to hold in~\eqref{eq:kLYM}, the quantities $\binom{n}{|F|}$ (for $F$ in $\F'$)  must be as large as possible---that is, the sets $F\in\F'$ must have size as close to $n/2$ as possible. More precisely, in order for equality to hold in~\eqref{eq:kLYM}, the list of the quantities $\binom{n}{|F|}$ for $F\in\F'$ in decreasing order (with multiplicities) must be the same as the list of the first $\Sigma(n,k)$ quantities $\binom{n}{|S|}$ for $S\subseteq 2^{[n]}$ in decreasing order (with multiplicities).

First, if $k$ and $n$ have different parities, then $\abs{\F'} = \Sigma(n,k)$ can only occur if 
\begin{align*}
	\F' = \binom{[n]}{\floor{\frac{n-k}{2}}} \cup \binom{[n]}{\floor{\frac{n-k}{2}}+1} \cup \cdots \cup \binom{[n]}{\floor{\frac{n-k}{2}}+k-1} .
\end{align*}
However, in that case, $Y_{k}$ is an induced subposet of $\F'$. Hence, adding $\varnothing$ produces an induced copy of $Y_k$ in $\F$, a contradiction.

Second, if $k$ and $n$ have the same parity, then $\abs{\F'} = \Sigma(n,k)$ can only occur if $\F'$ contains 
\begin{align*}
	\binom{[n]}{\frac{n-k}{2}+1} \cup \binom{[n]}{\frac{n-k}{2}+2} \cup \cdots \cup \binom{[n]}{\frac{n-k}{2}+k-1}
\end{align*}
plus $\binom{n}{\frac{n-k}{2}}$ sets from $\binom{[n]}{\frac{n-k}{2}} \cup \binom{[n]}{\frac{n-k}{2}+k}$.
If $\F'$ contains any set from $\binom{[n]}{\frac{n-k}{2}}$, then it is easy to see that $Y_{k}$ is an induced subposet of $\F'$ and adding $\varnothing$ produces an induced copy of $Y_k$ in $\F$. 
Otherwise, $\F'$ must contain all of the sets from $\binom{[n]}{\frac{n-k}{2}+k}$ and $n\geq k+2$. 
But in this case, $Y_{k}$ is again an induced subposet of $\F'$, giving an induced copy of $Y_k$ in $\F$, again a contradiction. 

Therefore, $\abs{\F'} \le \Sigma(n,k)-1$, which implies $\abs{\F} \le \Sigma(n,k)$, as desired.
\end{pfct}

\section{Concluding Remarks}

During the preparation of this article, we have learned that Tompkins and Wang recently proved Theorem \ref{thm:conjresult} independently \cite{TompkinsWang}. Their approach is closer to the method used in \cite{MCasey} and is different from the approach introduced in this article.

In fact, we believe that a more general result than Theorem~\ref{thm:conjresult} holds. Recall that $Y_{k,r}$ denotes the $r$-fork with a $k$-shaft poset and $Y'_{k,r}$ denotes its dual.
\begin{conj}\label{conj}
For all $k \ge 2$ and $r \ge 2$, there is an $n_0=n_0(k,r)$ such that if $n\ge n_0$, then $\La^{\sharp}(n, \{Y_{k,r}, Y'_{k,r}\}) = \Sigma(n,k)$.
\end{conj}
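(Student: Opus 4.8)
The plan is to run the same discharging machinery as in Sections~\ref{prelim}--\ref{results}, with $r$ entering only through the structural step, and to isolate the one place where $r>2$ genuinely changes the argument. First I would record the lower bound: the union of the $k$ middle levels avoids induced $Y_{k,2}$ and $Y'_{k,2}$, and since $Y_{k,2}$ (resp.\ $Y'_{k,2}$) is an induced subposet of $Y_{k,r}$ (resp.\ $Y'_{k,r}$), any family containing an induced $Y_{k,r}$ contains an induced $Y_{k,2}$; hence the $k$ middle levels remain $\{Y_{k,r},Y'_{k,r}\}$-free and $\La^{\sharp}(n,\{Y_{k,r},Y'_{k,r}\})\ge\Sigma(n,k)$. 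For the upper bound I would keep the definitions of spine, of $\spine(C)$, and of the weight $w(S,C)$ verbatim (none mentions $r$), so that the inequality $\sum_{S}\sum_{C}w(S,C)\ge\sum_{C}(\abs{\F\cap C}-k)$---which holds because full chains carrying fewer than $k-1$ members of $\F$ contribute $0$ on the left but a negative amount to $\sum_C(\abs{\F\cap C}-k)$---and the passage through Lemma~\ref{lem:kLYM} are unchanged. I would also reuse the reduction of Theorem~\ref{thm:conjresult} to the case $\varnothing,[n]\notin\F$ by induction on $k$, re-deriving the equality-case analysis with ``induced $Y_{k,r}$'' in place of ``induced $Y_k$''; note that the base case $k=2$ must now be proved directly rather than quoted from Theorem~\ref{thm:zero}.

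The structural input generalizes cleanly. Fix a spine $S=A_1\subset\cdots\subset A_\ell$ and suppose some full chain through it carries at least $k+1$ members of $\F$, so that $\F$ meets both $[\varnothing,A_1)$ and $(A_\ell,[n]]$. If $\F\cap[\varnothing,A_1)$ contained an antichain of size $r$, then those $r$ sets, together with the $k-1$ spine members and a set $Q\in\F$ with $A_\ell\subset Q$, would form an induced $Y'_{k,r}$; dually for $(A_\ell,[n]]$ and $Y_{k,r}$. Hence $\F\cap[\varnothing,A_1)$ and $\F\cap(A_\ell,[n]]$ each have width at most $r-1$, so at most $r-1$ sets on any level. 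Exactly as in Lemma~\ref{lem:spine_sum}, the per-spine sum equals $a_1b_1-a_0b_0$, where $a_0,a_1$ (resp.\ $b_0,b_1$) count the full chains of $[\varnothing,A_1]$ (resp.\ $[A_\ell,[n]]$) that avoid, resp.\ meet, the relevant family. Writing $p=a_1/(a_0+a_1)$ and $q=b_1/(b_0+b_1)$, the desired $a_1b_1\le a_0b_0$ is equivalent to $p+q\le1$.

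This is where $r>2$ bites, and it is the main obstacle. For $r=2$ each interval is hit by at most half of its full chains (Lemma~\ref{lem:moreempty}), so $p,q\le\tfrac12$; but for $r\ge3$ a small interval can be blocked entirely by an antichain of size $r-1$. For instance, with $k=2$, $r=3$, the singleton spine $A_1=\{1,2\}$ with $\{1\},\{2\}\in\F$ forces $p=1$, and any member of $\F$ above $A_1$ gives $q>0$, so $p+q>1$ and the per-spine inequality fails. Thus the local argument cannot survive unchanged, and what I would aim to prove instead is the \emph{global} bound $\sum_{S}\sum_{C}w(S,C)\le0$. The usable estimate is that a width-$(r-1)$ family in $2^{[m]}$ missing $\varnothing$ and $[m]$ is met by at most a fraction $(r-1)\sum_{j=1}^{m-1}\binom{m}{j}^{-1}$ of the full chains, which is $(1+o(1))\,2(r-1)/m$; hence there is $M=M(r)$ so that whenever $\abs{A_1}\ge M$ and $n-\abs{A_\ell}\ge M$ one has $p,q<\tfrac12$ and the spine is harmless. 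The positive contributions therefore come only from ``unbalanced'' spines, those for which $[\varnothing,A_1]$ or $(A_\ell,[n]]$ has dimension below $M$, i.e.\ with an endpoint in a level within $M$ of $0$ or of $n$.

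The remaining and hardest task is to absorb these unbalanced spines for $n\ge n_0(k,r)$. An unbalanced spine has $A_1$ or $A_\ell$ in an extreme level, and the total number of subsets of $[n]$ in those levels is only $O_{k,r}(n^{M-1})$, negligible against $\Sigma(n,k)$; so the plan is to show that the positive weight these spines generate is dominated by the strict negative slack ($p,q<\tfrac12$) accumulated over the abundant balanced spines, yielding $\sum_S\sum_C w(S,C)\le0$ and hence $\sum_C(\abs{\F\cap C}-k)\le0$. The genuine difficulty is exactness: Conjecture~\ref{conj} asks for the precise value $\Sigma(n,k)$, so an estimate of the form $\abs{\F}\le\Sigma(n,k)+O_{k,r}(1)$ would not close the problem. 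I therefore expect the final step to require a stability argument---showing that any admissible family within an additive constant of the extremum must consist of $k$ consecutive near-central levels---after which exactness follows from the equality analysis already present in the proof of Theorem~\ref{thm:conjresult}. I expect this coupling of a global (amortized) discharging estimate with a stability/exactness argument, rather than any single local inequality, to be the crux of the proof.
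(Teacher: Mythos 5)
This statement is Conjecture~\ref{conj}, which the paper does \emph{not} prove: it is posed as an open problem in the concluding remarks, and only the case $r=2$ is established (Theorem~\ref{thm:conjresult}). So there is no proof in the paper to compare against, and your argument has to stand on its own. It does not: it is an outline whose two decisive steps are missing, as you yourself acknowledge. What you get right is valuable, though. The lower bound via the $k$ middle levels is correct (an induced copy of $Y_{k,r}$ restricts to an induced copy of $Y_{k,2}$). More importantly, you correctly locate the unique place where $r$ enters the machinery of Sections~\ref{prelim}--\ref{results}: in Lemma~\ref{lem:spine_sum}, forbidding $Y'_{k,r}$ only forces $\F\cap[\varnothing,A_1]$ to have width at most $r-1$ rather than to be a chain, so Lemma~\ref{lem:moreempty} no longer applies, and your example ($k=2$, $r=3$, $\{1\},\{2\},\{1,2\}\in\F$, giving $a_0=0$) is a genuine counterexample to the per-spine inequality $a_1b_1\le a_0b_0$. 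This is precisely the obstruction that stops the paper's method from extending, and identifying it cleanly is the most useful part of your write-up.

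The gap is everything after that diagnosis. First, the proposed global inequality $\sum_{S}\sum_{C}w(S,C)\le0$ is asserted, not proved: you give no discharging rule that transfers the positive excess of an ``unbalanced'' spine to specific balanced spines, no argument that the negative slack is located where it is needed (the balanced spines of a given family need not interact with its unbalanced ones), and no treatment of families concentrated near $\varnothing$ or $[n]$, for which balanced spines may be scarce. Second, even granting that step, your own accounting only promises $\abs{\F}\le\Sigma(n,k)+O_{k,r}(1)$, and the stability theorem you invoke to upgrade this to the exact value $\Sigma(n,k)$ is not stated, let alone proved; the equality analysis in the proof of Theorem~\ref{thm:conjresult} applies only after one already has the sharp LYM-type bound of Theorem~\ref{thm:firstLYM}, which is exactly what is in question here. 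Third, the base case $k=2$, $r\ge3$ of your induction can no longer be quoted from Theorem~\ref{thm:zero} and is left unproved. In short: you have correctly explained \emph{why} this is still a conjecture, but you have not resolved it.
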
 
Theorem~\ref{thm:conjresult} is the case when $r=2$; note that for all $k \geq 2$, $n_0(k,2)=k+1$.

The authors thank Kirk Boyer, Kaave Hosseini, Eric Sullivan and Casey Tompkins for many valuable discussions.


\begin{thebibliography}{99}
	
	
	\bibitem{EdTao} 
	E. Boehnlein and T. Jiang, Set families with a forbidden induced subposet, \textit{Combin. Probab. Comput.}, \textbf{21} (2012), no. 4, 496--511.
	
	\bibitem{Bukh}
	B. Bukh, Set families with a forbidden subposet, \textit{Electron. J. Combin.}, \textbf{16} (2009), no. 1, Research paper 142, 11pp.
	
	
	\bibitem{CarK} 
	T. Carroll and G. O. H. Katona, Bounds on maximal families of sets not containing three sets with $A \cup B \subset C, A \not \subset B$, \textit{Order}, \textbf{25} (2008), no. 3, 229--236.
	
	
	\bibitem{Krfork} A. De Bonis and G. O. H. Katona, Largest families without an $r$-fork, \textit{Order}, \textbf{24} (2007), no. 3, 181--191.
	
	\bibitem{DKS} 
	A. De Bonis, G. O. H. Katona, and K. J. Swanepoel, Largest family without $A\cup B\subseteq C\cap D$, \textit{J. Combin. Theory Ser. A}, \textbf{111} (2005), no. 2, 331--336.
	
	\bibitem{Erd}
	P. Erd\H{o}s, On a lemma of Littlewood and Offord, \textit{Bull. Amer. Math. Soc.}, \textbf{51} (1945), 898--902.
	
	
%
	
	\bibitem{GriggsLi}
	J. R. Griggs and W.-T. Li, Progress on poset-free families of subsets \textit{Recent Trends in Combinatorics}, 317--338, IMA Vol. Math. Appl., \textbf{159}, \textit{Springer, [Cham]}, 2016.	

	\bibitem{GLL}
	J. R. Griggs, W.-T. Li, and L. Lu, Diamond-free families, \textit{J. Combin. Theory Ser. A}, \textbf{119} (2012), no. 2, 310--322.
	
	\bibitem{GLu}
	J. R. Griggs and L. Lu, On families of subsets with a forbidden subposet, \textit{Combin. Probab. Comput.}, \textbf{18} (2009), no. 5, 738--748.	
	
	\bibitem{GMT2}
	D. Gr\'osz, A. Methuku, and C. Tompkins, An upper bound on the size of diamond-free families of sets (arXiv preprint), (2016) \texttt{arXiv:1601.06332}, To appear in \textit{J. Combin. Theory Ser. A}.
	
%
%
	
	\bibitem{Katona_survey}
	G. O. H. Katona, Forbidden intersection patterns in the families of subsets (introducing a method). \textit{Horizons of Combinatorics}, 119--140, Bolyai Soc. Math. Stud., \textbf{17}, \textit{Springer, Berlin}, 2008.
		
	
	
	\bibitem{KatTar}
	G. O. H. Katona and T. G. Tarj\'{a}n, Extremal problems with excluded subgraphs in the $n$-cube. \textit{Graph Theory (\L{}ag\'{o}w, 1981)}, 84--93, Lecture Notes in Math., \textbf{1018}, \textit{Springer, Berlin}, 1983.
	
	
	
	
	
%
	
%
	
%
%
	
	\bibitem{MCasey}
	A. Methuku and C. Tompkins, Exact forbidden subposet results using chain decompositions of the cycle, \textit{Electron. J. Combin.}, \textbf{22} (2015), no. 4, Paper 4.29, 14pp.
	
	\bibitem{MPal}
	A. Methuku and D. P\'alv\"olgyi, Forbidden hypermatrices imply general bounds on induced forbidden subposet problems. \textit{Combin. Probab. Comput.}, \textbf{26} (2017), no. 4, 593--602.
	
	
	\bibitem{Patkos} B. Patk\'os, Induced and non-induced forbidden subposet problems, \textit{Electron. J. Combin.}, \textbf{22} (2015), no. 1, Paper 1.30, 16pp.
		
	
	
	
	\bibitem{Sper}
	E. Sperner, Ein Satz \"{u}ber Untermegen einer endlichen Menge, \textit{Math. Z.}, \textbf{27} (1928), no. 1,  544--548.
	
	\bibitem{TompkinsWang}
	C. Tompkins and Y. Wang, On an extremal problem involving a pair of forbidden posets (arXiv preprint), (2017) \texttt{arXiv:1710.10760}
	
	\bibitem{tompkinsthesis}
	C. Tompkins, Extremal problems on finite sets and posets (Doctoral dissertation), (2015) \url{https://mathematics.ceu.edu/sites/mathematics.ceu.hu/files/attachment/basicpage/27/tompkinsthesis.pdf} Retrieved 29 September 2017.

	\bibitem{Tran}
	H. T. Thanh, An extremal problem with excluded subposets in the Boolean lattice. \textit{Order}, \textbf{15} (1998), no. 1, 51--57.

%
%
	
	
	
	
	
	
\end{thebibliography}
\end{document}